\newtheorem{Theorem}{Theorem}[section]
\newtheorem{lemma}[Theorem]{Lemma}
\theoremstyle{definition}
\newcommand\scalemath[2]{\scalebox{#1}{\mbox{\ensuremath{\displaystyle #2}}}}
\providecommand{\keywords}[1]
{
  \small	
  \textit{Keywords:} #1
}
\providecommand{\subjclass}[2][2010]
{
\small
\emph{AMS Mathematics Subject Classification 2010:} #2
}
\title{Diffusion Approximation for Transport Equations with Dissipative Drifts for Time Dependent Coefficients}
\author{First Author Name$^{a}$$^{*}$, Second Author Name$^{b}$$^{c}$, etc.$^{a}$$^{c}$ \\
       \small $^{a}$Department, University, City, Country \\
        \small $^{b}$Department, University, City, Country \\
        \small $^{c}$Department, University, City, Country \\\\
        \small $^{*}$Corresponding author: first name, initials, %surname; \tt{email.address}
}
\author{Luca Di Persio \footnote{College of Mathematics, 
Department of Computer Science, University of Verona, strada le Grazie 15 - 37134 Verona - Italy, luca.dipersio@univr.it}
  \and  Yuri Kondratiev \footnote{Dragomanov University, Kiev, yukondrat@gmail.com}
  \and Viktorya Vardanyan \footnote{Department of Mathematics, University of Trento, Via Sommarive 14-38123 Povo(TN)-Italy, viktorya.vardanyan@unitn.it}
}
\date{} %leave blank
\begin{document}
\maketitle

\begin{abstract} 
We consider a generalization of classical results of Freidlin and  Wentzell  to the case of time dependent dissipative drifts.
We show the convergence of diffusions with  multiplicative noise in the zero limit of a diffusivity parameter to the related dynamical systems. The solution to the associated transport equation is obtained as an application.

\end{abstract} \hspace{10pt}

\keywords{diffusion process, dynamical system, perturbation parameter,  local Lipschitz condition, time dependent dissipative drift, zeroth order approximation,  Cauchy problem, transport equation}\\
\subjclass[2010]{47D07, 37P30, 60J65, 60G55}

\section{Introduction}
Dynamical systems with small random perturbations are the subject of well know studies by Freidlin and Wentzell \cite{FW}.
Their classical results essentially cover the case of diffusions with regular drifts and small parameters in multiplicative noise. The main problem within latter scenario, which is the focus of our study,   is the convergence of such diffusions to deterministic dynamical systems and related evolution equation of transport type.

We study stochastic differential equations(SDEs) with a small perturbation parameter, in which the coefficients depend on the time.
The dependence on time is the  main novelty in our considerations comparing with \cite{FW}. 

Let us also underline that in \cite{FW}
the authors work under standard probabilistic assumptions on the drift in the considered diffusions, namely: global Lipschitz
condition and linear bound on the growth. Nevertheless, in several applications in mathematical physics, these assumptions are too restrictive, as in the case of, e.g., the stochastic quantisation method. 

 We consider the  dissipative condition on the drift coefficient and the local Lipschitz condition on the drift and diffusion coefficients  formulated for $0\leq t \leq T$, where $0<T<+\infty$ is a finite time horizon, with constants depending on $T$. Under latter conditions,  we prove both existence and uniqueness for the perturbed SDE. Moreover, we show the convergence result for the solution of the  perturbed system to the solution of the unperturbed system, when the perturbation parameter approaches zero. We then consider an application of the above-mentioned results to the Cauchy problem and the transport equations.

\section{The model}
Let $(\Omega, \mathcal{F},\{\mathcal{F}_t\}_{t\in [0,T]}, \mathbb{P})$ be a reference filtered probability space. For the sake of completeness, let us recall that the latter means that $\Omega$ is a nonempty set,  interpreted as the space of elementary events, while $\mathcal{F}$, is a $\sigma$-algebra of subsets of  $\Omega$, $\mathbb{P}$ is a probability measure on the $\sigma$-algebra $\mathcal{F}$, and
$\{\mathcal{F}_t\}_{t \in [0,T]}$, $0<T<+\infty$ being a finite horizon time, is a filtration over $(\Omega, \mathcal{F}, \mathbb{P})$. Finally, we define $W=\{W_t\}_{t \in [0,T]}$ as a given $l$-dimensional standard Brownian motion adapted to the just defined filtration.\\
We consider an equation
\begin{equation}
\label{ode}
\frac{d}{dt} x(t)=b(t,x(t)),\ x(0)=x_0
\end{equation}
and the perturbed stochastic differential equation
\begin{equation}
    \label{perturbed sde}
    dX^\varepsilon(t)= b(t,X^\varepsilon(t))dt + \varepsilon \sigma(t,X^\varepsilon(t)) dW_t, \ X^\varepsilon(0) =x_0
\end{equation}
in $\mathbb{R}^r$, $\varepsilon$ representing a fixed, {\it small}, multiplicative constant. The functions $b(t,x)$ and $\sigma(t,x)$ are measurable functions on $[0,T]\times \mathbb{R}^r$, taking,  respectively, values in $\mathbb{R}^r$ and $\mathbb{R}^{r\times l}$, 
$x_0$ is a given $\mathbb{R}^r$-valued random variable independent of $W_t$.\\
The stochastic process $X^\varepsilon(t)$ is considered as a result of small perturbations of \eqref{ode}, with perturbation parameter $\varepsilon$.
Solving \eqref{perturbed sde}, means finding a random process $X_t = X_t(W)$ which satisfies the relation
\begin{equation*}
    X^\varepsilon(t)=x_0+\int_0^t b(s,X^\varepsilon(s))ds+\varepsilon \int_0^t\sigma(s,X^\varepsilon(s))dW_s,
\end{equation*}
with probability 1 for every $t \in [0,T]$.
Let us first assume that the coefficients of our diffusion are Lipschitz continuous and with linear growth bound. This implies that \eqref{perturbed sde} admits a uniquely determined solution. Equations of the form \eqref{perturbed sde} were studied, e.g.,
by Gihman and Skorohod \cite{GS 3} in Section 2 of Chapter 2 and in Section 3 of Chapter 3. \\
We want to prove that such (sequence of) solution(s) of SDE (\ref{perturbed sde}) converges to the solution of (\ref{ode}) uniformly, when $\varepsilon \rightarrow 0^+$, also estimating the convergence rate.

Before proving main results, let us recall the following  result that will be then exploited in our proofs.
\begin{lemma} \label{Gronwall-Lemma} [Gronwall] 
	Let $m(t), t \in [0,T]$, be a non-negative function satisfying  
	\begin{equation}
	\label{Gronwall}
	m(t) \leq C + \alpha \int_{0}^{t}m(s)ds, \ t\in [0,T]
	\end{equation}
	with $C,\alpha \geq 0$. \\
	Then $m(t) \leq C e^{\alpha t},$ for $ t \in [0,T]$.
\end{lemma}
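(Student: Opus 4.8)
The plan is to prove Gronwall's inequality by the standard comparison/integrating-factor argument applied to the accumulated integral. First I would introduce the auxiliary function
\[
R(t) = C + \alpha \int_0^t m(s)\,ds,
\]
which is absolutely continuous on $[0,T]$ (since $m$ is non-negative and integrable as a consequence of the bound \eqref{Gronwall}) and satisfies $R(0) = C$ together with $m(t) \leq R(t)$ for all $t \in [0,T]$. The point of passing from $m$ to $R$ is that $R$ is differentiable almost everywhere with $R'(t) = \alpha\, m(t)$, and using the hypothesis $m(t) \leq R(t)$ I can bound this derivative by $R'(t) = \alpha\, m(t) \leq \alpha\, R(t)$.

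Next I would turn this differential inequality into something integrable by multiplying through by the integrating factor $e^{-\alpha t}$. This gives
\[
\frac{d}{dt}\Bigl( e^{-\alpha t} R(t)\Bigr) = e^{-\alpha t}\bigl( R'(t) - \alpha R(t)\bigr) \leq 0,
\]
so the function $t \mapsto e^{-\alpha t} R(t)$ is non-increasing on $[0,T]$. Comparing its value at $t$ with its value at $0$ yields $e^{-\alpha t} R(t) \leq R(0) = C$, hence $R(t) \leq C e^{\alpha t}$. Finally, combining this with the pointwise bound $m(t) \leq R(t)$ established above gives the desired conclusion $m(t) \leq C e^{\alpha t}$ for every $t \in [0,T]$.

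The main technical care, rather than a genuine obstacle, lies in the regularity bookkeeping: one must justify differentiating $R$ and applying the fundamental theorem of calculus when $m$ is merely assumed measurable and non-negative rather than continuous. The cleanest way to handle this is to observe that $t \mapsto \int_0^t m(s)\,ds$ is absolutely continuous with almost-everywhere derivative $m(t)$, so the monotonicity argument for $e^{-\alpha t} R(t)$ holds in the almost-everywhere sense and integrates to the stated inequality. I should also dispose of the degenerate case $\alpha = 0$ separately, where the claim reduces immediately to $m(t) \leq C = C e^{0}$, and note that if one prefers to avoid differentiability entirely the same result follows by iterating the inequality \eqref{Gronwall}, substituting the bound into itself $n$ times and recognizing the partial sums of the exponential series in the limit $n \to \infty$.
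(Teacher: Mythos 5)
The paper does not prove this lemma at all: it is recalled as a known classical result (Gronwall's inequality) and used as a black box in the later theorems, so there is no proof of record to compare yours against. Your integrating-factor argument is the standard one and is correct: passing to $R(t)=C+\alpha\int_0^t m(s)\,ds$, noting $R'=\alpha m\leq\alpha R$ almost everywhere, and using monotonicity of $e^{-\alpha t}R(t)$ (valid because $R$ is absolutely continuous) gives exactly the stated bound. The only imprecision is your parenthetical claim that integrability of $m$ is ``a consequence of the bound \eqref{Gronwall}'': the bound alone does not yield integrability (if $\int_0^t m(s)\,ds=+\infty$ the hypothesis holds vacuously while the conclusion can fail), so measurability and integrability of $m$ on $[0,T]$ must be read as an implicit hypothesis of the lemma, as is standard; with that understanding your proof, including the regularity bookkeeping via absolute continuity, is complete. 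Your closing remark about the iteration alternative is also sound, and that route likewise needs the same implicit integrability to kill the remainder term.
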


\section{Zeroth Order Approximation}

\begin{Theorem} \label{Theorem 1.2}
	Assume that the coefficients of \eqref{perturbed sde} are linearly bounded and satisfy a uniform Lipschitz condition, i.e. for every $T>0$ there exist  constants $K_T$, $L_T$ such that for all $x,y \in \mathbb{R}^r$, $t \in [0,T]$:
	\begin{equation}
	    \label{boundedness}
	    |b(t, x)|^2+|\sigma(t, x)|^2 \leq K_T^2(1+|x|^2)
	\end{equation}
     \begin{equation}
	    \label{Lipschitz}
	    |b(t, x)-b(t, y)|^2+|\sigma(t, x)-\sigma(t, y)|^2 \leq L_T^2|x-y|^2
	\end{equation}
	Then for all $t>0$ and $\delta>0$ we have:
	\begin{enumerate}
		\item $E|X^{\varepsilon}(t)-x(t)|^2\leq \varepsilon^2 a(t)$, and
		\item $\lim_{\varepsilon \to 0} P\lbrace  \max_{0\leq s \leq t}|X^{\varepsilon}(s)-x(s)| > \delta \rbrace=0$
	\end{enumerate}
where a(t) is a monotone increasing function expressed in terms of $E|x_0|$, $L_T$ and $K_T$.
	
\end{Theorem}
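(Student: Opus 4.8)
The plan is to work with the integral forms of \eqref{ode} and \eqref{perturbed sde} and to control the difference $X^\varepsilon(t)-x(t)$ in $L^2(\Omega)$ by means of the It\^o isometry, the Cauchy--Schwarz inequality, the structural hypotheses \eqref{boundedness}--\eqref{Lipschitz}, and the Gronwall Lemma~\ref{Gronwall-Lemma}. As a preliminary step I would establish an a~priori second-moment bound $\sup_{0\le s\le t}E|X^\varepsilon(s)|^2 \le C(t)$, uniform in $\varepsilon \in (0,1]$: squaring the integral representation of $X^\varepsilon$, splitting via $|u+v+w|^2\le 3(|u|^2+|v|^2+|w|^2)$, and applying the It\^o isometry to the stochastic term together with the linear growth bound \eqref{boundedness} produces an inequality of Gronwall type for $s\mapsto E|X^\varepsilon(s)|^2$, whence the bound. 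This is precisely what allows the diffusion term to be estimated through \eqref{boundedness} without circularity.

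For statement (1), subtracting the integral equations gives
\begin{equation*}
X^\varepsilon(t)-x(t) = \int_0^t \bigl[b(s,X^\varepsilon(s))-b(s,x(s))\bigr]\,ds + \varepsilon\int_0^t \sigma(s,X^\varepsilon(s))\,dW_s .
\end{equation*}
Setting $m(t) = E|X^\varepsilon(t)-x(t)|^2$ and using $|u+v|^2\le 2|u|^2+2|v|^2$, I would bound the drift contribution by Cauchy--Schwarz and the Lipschitz condition \eqref{Lipschitz} by $2t\,L_T^2\int_0^t m(s)\,ds$, and the noise contribution by the It\^o isometry and the growth bound \eqref{boundedness} by $2\varepsilon^2 K_T^2\int_0^t\bigl(1+E|X^\varepsilon(s)|^2\bigr)\,ds$. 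Invoking the a~priori moment bound, the latter term is at most $\varepsilon^2 C$ with $C$ independent of $\varepsilon$. Hence $m(t) \le \varepsilon^2 C + 2tL_T^2\int_0^t m(s)\,ds$, and Lemma~\ref{Gronwall-Lemma}, applied on $[0,t]$ with the constant $\alpha = 2tL_T^2$, yields $m(t)\le \varepsilon^2 a(t)$ with $a(t)$ of the form $C\,e^{2t^2 L_T^2}$, manifestly monotone increasing and depending only on $E|x_0|$, $K_T$, $L_T$ and $T$.

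For statement (2), the difference with respect to (1) is that the supremum sits inside the probability, so one needs a uniform-in-time estimate. Let $g(\tau) = E\bigl[\max_{0\le s\le\tau}|X^\varepsilon(s)-x(s)|^2\bigr]$. The drift term is handled as before, since $\max_{0\le s\le\tau}|\int_0^s(\cdots)\,du|^2 \le \tau\int_0^\tau|\cdots|^2\,du$; to treat the stochastic integral, whose running maximum cannot be bounded pathwise, I would invoke Doob's maximal inequality to obtain $E[\max_{0\le s\le\tau}|\varepsilon\int_0^s\sigma\,dW|^2] \le 4\varepsilon^2 E|\int_0^\tau\sigma\,dW|^2$, and then proceed exactly as in (1). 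This gives $g(\tau)\le \varepsilon^2 \tilde C + 2\tau L_T^2\int_0^\tau g(u)\,du$, and Gronwall yields $g(t)\le \varepsilon^2\tilde a(t)$. Chebyshev's inequality then gives
\begin{equation*}
P\Bigl\{\max_{0\le s\le t}|X^\varepsilon(s)-x(s)|>\delta\Bigr\} \le \frac{g(t)}{\delta^2} \le \frac{\varepsilon^2\tilde a(t)}{\delta^2} \xrightarrow[\varepsilon\to0]{} 0 .
\end{equation*}

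I expect the main obstacle to be the passage from the pointwise $L^2$ estimate to the estimate on the running maximum required in (2): the stochastic integral term cannot be controlled pathwise, so Doob's submartingale maximal inequality is essential, and care is needed to close the Gronwall inequality for $g$ rather than for $m$. A secondary technical point is ensuring the a~priori moment bound is uniform in $\varepsilon$, so that the function $a(t)$ depends only on the quantities stated and not on the perturbation parameter.
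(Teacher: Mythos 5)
Your proposal is correct, and it reaches both assertions by a route that differs from the paper's in two respects. For the moment estimates (the a priori bound and assertion (1)), the paper applies It\^o's formula to $|X^\varepsilon(t)|^2$ and $|X^\varepsilon(t)-x(t)|^2$, producing the cross term $2\int_0^t E\langle X^\varepsilon-x,\,b(s,X^\varepsilon)-b(s,x)\rangle\,ds$ which is then handled by Cauchy--Schwarz and \eqref{Lipschitz}; you instead square the difference of the integral equations, split with $|u+v|^2\le 2|u|^2+2|v|^2$, and use the It\^o isometry. Both close the same Gronwall loop (with slightly different constants, $2L_T$ versus $2tL_T^2$ in the exponent), and both are legitimate here; note though that the paper's It\^o-formula route is the one that survives in the later dissipative setting (Theorem \ref{Theorem 1.2.2}), where only the inner-product conditions are available and your Lipschitz-squared estimate would not apply. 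The more substantive difference is in assertion (2): the paper never forms $E[\max_{0\le s\le t}|X^\varepsilon(s)-x(s)|^2]$; it splits the running maximum into the drift part and the stochastic-integral part, estimates the drift part in probability by Chebyshev together with assertion (1), estimates the martingale part by the generalized Kolmogorov inequality \eqref{KolmogorovIntegral} (a probability-form maximal inequality), and concludes by a $\delta/2$ union bound. You instead prove the stronger intermediate statement $E[\max_{0\le s\le t}|X^\varepsilon(s)-x(s)|^2]\le\varepsilon^2\tilde a(t)$ via Doob's $L^2$ maximal inequality and a second Gronwall argument, after which a single application of Chebyshev finishes. Your version buys a uniform-in-time $L^2$ estimate (assertion (1) upgraded to the running maximum), which is more than the theorem asks; the paper's version is more economical, needing only the probability form of the maximal inequality and reusing assertion (1) directly. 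Two technical points in your write-up are worth making explicit: the Gronwall lemma is applied on $[0,t]$ with $t$ frozen, so the $t$-dependent ``constants'' $\varepsilon^2 C(t)$ and $\alpha=2tL_T^2$ must be majorized by their values at the right endpoint (the same mild abuse the paper commits), and one should observe that $g(\tau)$ is finite (which your Doob and drift bounds already guarantee) before Gronwall can be invoked for it.
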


\begin{proof}
We start showing that $1+E|X^\varepsilon(t)|^2$ is bounded uniformly in $\varepsilon \in [0,1]$. To this end, we use Ito's formula, the Cauchy-Schwarz inequality and the condition (\ref{boundedness})  of linear boundedness for the SDE (\ref{perturbed sde}) coefficients.
\begin{align*}
	  1+ E|X^\varepsilon(t)|^2 
	  &= 1+E|x_0|^2+2\int_0^t E < X^\varepsilon(s),b(s,X^{\varepsilon}(s))> ds +
	   \varepsilon^2\int_0^t E |\sigma(s,X^{\varepsilon}(s))|^2ds\\
	  & \leq 1+E|x_0|^2+2\int_0^t E\sqrt{|X^\varepsilon(s)|^2  b(s,X^{\varepsilon}(s))^2} ds +
	   \varepsilon^2\int_0^t E |\sigma(s,X^{\varepsilon}(s))|^2ds \\
	  & \leq 1+E|x_0|^2+2\int_0^t E\sqrt{|X^\varepsilon(s)|^2 K_T^2(1+|X^\varepsilon(s)|^2)} ds\\
   &+ \varepsilon^2\int_0^t E [K_T^2(1+|X^\varepsilon(s)|^2)]ds \\
	  & \leq 1+E|x_0|^2 +(2K_T+\varepsilon^2K_T^2)\int_0^t(1+E|X^\varepsilon(s)|^2)ds
\end{align*}

Then, in view of applying the Gronwall Lemma (\ref{Gronwall}), we choose $m(t)=1+E|X^\varepsilon(t)|^2$, $C=1+E|x_0|^2 $ and $\alpha=2K_T+\varepsilon^2K_T^2$, then
	obtaining 
	\begin{equation}
	\label{(1.5)}
	1+E|X^\varepsilon(t)|^2\leq (1+E|x_0|^2) \exp[(2K_T+\varepsilon^2K_T^2)t].
	\end{equation}
	
In what follows, we use the above result to show that $E|X^\varepsilon(t)-x(t)|^2\leq \varepsilon^2 a(t)$. 
Proceeding as before, using Lipschitz condition \eqref{Lipschitz} and the boundedness of $1+E|X^\varepsilon(t)|^2$ i.e.\eqref{(1.5)}, we have:
	\begin{align*}
	  E|X^\varepsilon(t)-x(t)|^2 
	  &\leq 2\int_0^t E < X^\varepsilon(s)-x(s),b(s,X^{\varepsilon}(s))-b(s,x(s))> ds \\
   &+\varepsilon^2\int_0^t E |\sigma(s,X^{\varepsilon}(s))|^2ds\\
	  & \leq 2\int_0^t E\sqrt{|X^\varepsilon(s)-x(s)|^2  |b(s,X^{\varepsilon}(s))-b(s,x(s))|^2} ds\\
   &+ \varepsilon^2\int_0^t E |\sigma(s,X^{\varepsilon}(s))|^2ds \\
	  & \leq 2\int_0^t E\sqrt{|X^\varepsilon(s)-x(s)|^2 L_T^2|X^\varepsilon(s)-x(s)|^2} ds\\
   & + \varepsilon^2 K_T^2 \int_0^t  (1+E|X^\varepsilon(s)|^2)ds \\
	  & \leq  2L_T\int_0^t E|X^\varepsilon(s)-x(s)|^2ds+ \varepsilon^2 K_T^2\int_0^t(1+E|X^\varepsilon(s)|^2)ds
	\end{align*}

By Lemma \eqref{Gronwall}, choosing $m(t)=E|X^\varepsilon(t)-x(t)|^2$, $\alpha= 2L_T$, $C=\varepsilon^2 K_T^2\int_0^t(1+E|X^\varepsilon(s)|^2)ds$, we derive
	the following inequality
	
	\begin{align*}
	E|X^\varepsilon(t)-x(t)|^2
	&\leq e^{2L_{T}t} \varepsilon^2 K_T^2\int_0^t(1+E|X^\varepsilon(s)|^2)ds\\
	& \leq e^{2L_{T}t} \varepsilon^2 K_T^2 \int_{0}^{t}  (1+E|x_0|^2) \exp[(2K_T+\varepsilon^2K_T^2)s] ds \\
	& \leq \varepsilon^2 a(t)\,,
	\end{align*}
	and we conclude choosing $a(t)$
such that it is a monotone increasing function.\\
Concerning the second part of the theorem, we first use the definition of our diffusion processes \eqref{ode} and \eqref{perturbed sde}  to get
	\begin{align}
	\max_{0 \leq s \leq t}|X^\varepsilon(s)-x(s)|
	& =\max_{0 \leq s \leq t}|\int_{0}^{s} b(v,X^\varepsilon(v))dv + \varepsilon \int_{0}^{s}\sigma(v,X^\varepsilon(v))dw_v -  \int_{0}^{s}b(v,x(v))dv| \nonumber\\
	& \leq \max_{0 \leq s \leq t}|\int_{0}^{s} b(v,X^\varepsilon(v))dv-\int_0^s b(v,x(v)dv| \nonumber\\
  &+ \varepsilon \max_{0 \leq s \leq t} |\int_{0}^{s}\sigma(v,X^\varepsilon(v))dw_v | \nonumber\\
	& \leq \int_{0}^{t} |b(s,X^\varepsilon(s))-b(s,x(s))|ds +\varepsilon \max_{0 \leq s \leq t} |\int_{0}^{s}\sigma(v,X^\varepsilon(v))dw_v |. \label{(1.6)}
\end{align}
Then, by the Chebyshev inequality, setting $$\xi(\omega) = \int_{0}^{t} |b(s,X^\varepsilon(s))-b(s,x(s))|ds, \ a=\frac{\delta}{2}, \ f(x)=x^2\,,$$ 
we estimate the first term on the right hand side of \eqref{(1.6)}, i.e.:
	
	\begin{align}
	P\lbrace \int_{0}^{t}|b(s,X^\varepsilon(s))-b(s,x(s))|ds> \frac{\delta}{2} \rbrace 
	&\leq \frac{E[\int_{0}^{t}|b(s,X^\varepsilon(s))-b(s,x(s))|ds]^2}{(\frac{\delta}{2})^2} \nonumber \\
	& \leq 4 \delta^{-2} t E\int_{0}^{t} |b(s,X^\varepsilon(s))-b(s,x(s))|^2 ds \nonumber \\
	& \leq 4 t \delta^{-2} L_T^2  \int_{0}^{t}E|X^\varepsilon(s)-x(s)|^2 ds \nonumber \\
	& \leq 4t L_T^2 \delta^{-2} \varepsilon^2   \int_{0}^{t}a(s)ds \nonumber \\
	& =\varepsilon^2 \delta^{-2}a_1(t)
	 \label{(1.7)}
	\end{align}
	where in the second step we used the Cauchy-Schwarz inequality,  in the third we applied the Lipschitz condition for the drift, then concluding exploiting the first assertion of the theorem.
 Having obtained an estimate for the first term on the right hand side, we move forward to estimate the second term on the right hand side of \eqref{(1.6)}.
	To this aim we first use the generalized Kolmogorov inequality for stochastic integrals, namely:
	\begin{equation}\label{KolmogorovIntegral}
 P \lbrace \sup_{a \leq t \leq b} |\int_{a}^{t}f(s,\omega)dw_s|>C |\mathcal{N}_a \rbrace \leq \frac{1}{c^2} E(\int_{a}^{b} f^2(s,\omega)ds |\mathcal{N}_a) \,,
 \end{equation}
	then, setting $f(v,\omega)= \sigma(v,X^\varepsilon(v)), \ C=\frac{\delta}{2\varepsilon}, \mathcal{N}_a= \Omega, \ a=0, b=t$, we can rewrite \eqref{KolmogorovIntegral} as follows
	\begin{align}
	P \lbrace \varepsilon \max_{0 \leq s \leq t} |\int_{0}^{s} \sigma(v,X^\varepsilon(v))dw_v|>\frac{\delta}{2} \rbrace \nonumber
	& =	P \lbrace \sup_{0 \leq s \leq t} |\int_{0}^{s} \sigma(v,X^\varepsilon(v))dw_v|>\frac{\delta}{2 \epsilon} \rbrace \nonumber \\
	& \leq \frac{1}{(\frac{\delta}{2 \varepsilon})^2} E(\int_{0}^{t}|\sigma(s,X^\varepsilon(s))|^2ds) \nonumber \\
	& \leq \frac{4 \varepsilon^2}{\delta^2} \int_{0}^{t} K_T^2(1+E|X^\varepsilon(s)|^2)ds\nonumber \\
	& \leq 4 \varepsilon^2 \delta^{-2}K_T^2(1+E|x_0|^2)\int_0^t \exp[(2K_T+\varepsilon^2K_T^2)s]ds \nonumber \\
	&= \varepsilon^2 \delta^{-2} a_2(t)\, \label{(1.8)}
	\end{align}
	so that, combining  \eqref{(1.6)}, \eqref{(1.7)} and \eqref{(1.8)}, 	we have:
	\begin{align*}
	\lim_{\varepsilon \to 0}P\lbrace \max_{0 \leq s \leq t}|X^\varepsilon(s)-x(s)|>\delta\rbrace
	& \leq \lim_{\varepsilon \to 0} P \lbrace \int_{0}^{t}|b(s,X^\varepsilon(s))-b(s,x(s))|ds\\
	&+ \varepsilon\max_{0 \leq s \leq t}   |\int_{0}^{s}\sigma(v,X^\varepsilon(v))dw_v|>\delta \rbrace \\
	& \leq \lim_{\varepsilon \to 0} P \lbrace \int_{0}^{t}|b(s,X^\varepsilon(s))-b(s,x(s))|ds> \frac{\delta}{2}\rbrace \\
	&+ \lim_{\varepsilon \to 0} P \lbrace  \varepsilon\max_{0 \leq s \leq t}   |\int_{0}^{s}\sigma(v,X^\varepsilon(v))dw_v|>\frac{\delta}{2} \rbrace \\
	& \leq \lim_{\varepsilon \to 0} [\varepsilon^2 \delta^{-2}a_1(t)+ \varepsilon^2\delta^{-2}a_2(t)]=0.
	\end{align*}
 which concludes the proof.
\end{proof}	

\section{Zeroth Order Approximation for Dissipative Case} 
\subsection{Existence and Uniqueness of a Solution of the SDE with Dissipative Drift}
We aim at generalizing what obtained within the previous section to consider weaker conditions on the coefficients of eq. \eqref{perturbed sde}. Namely, we aim at finding that eq. \eqref{perturbed sde} still admits a unique solution when considering dissipativity for the drift and local Lipschitz  conditions for  all the other coefficients.
Moreover, after proving both existence and uniqueness, we will also derive  the zeroth order approximation for the process solving \eqref{perturbed sde}, under the same aforementioned assumptions on the drift as well as for the other coefficients.
Let us start proving a result similar to Theorem 2, Chapter 2, Section 6,  \cite{GS 1}, 
where the Authors considered  a similar SDE-problem in dimension $r=1$. In what follows, we will use it to derive our existence result, see Theorem \eqref{Existence, Uniqueness}.
\begin{Theorem}
	\label{Theorem 2 GS}
	Assume that the coefficients $b_1(t,x)$, $b_2(t,x)$, $\sigma_1(t,x)$, $\sigma_2(t,x)$ of the equations
	\begin{equation}
	\label{(4)}
	d{X}_{i}^\varepsilon(t) =b_i(t,X_{i}^\varepsilon(t))dt + \varepsilon \sigma_i(t,X_{i}^\varepsilon(t))dW_t, \ i=1,2
	\end{equation}
satisfy Lipschitz condition and linear growth condition, i.e.,
for every $T>0$ there exist constants $K_T,L_T$ such that for $t \in [0,T]$, $x,y \in \mathbb{R}^r$, $i=1,2$
\[
|b_i(t,x)-b_i(t,y)|+|\sigma_i(t,x)-\sigma_i(t,y)|\leq L_T|x-y|,
\]
\[
|b_i(t,x)|^2+|\sigma_i(t,x)|^2 \leq K_T^2(1+|x|^2)
\]
and that for some $N>0$ with $|x|\leq N$ ,
$b_1(t,x)=b_2(t,x)$ and $\sigma_1(t,x)=\sigma_2(t,x)$.\\
If $X_{1}^\varepsilon(t)$ and $X_{2}^\varepsilon(t)$ are solutions of \eqref{(4)} with the same initial condition $X_{1}^\varepsilon(0)= X_{2}^\varepsilon(0)=x_0$, $M[x_0^2] < \infty$, and $\tau_i$ is the largest t for which 
$\sup_{0 \leq s \leq t } |X_{i}^{\varepsilon}(s)|\leq N$, then $P \lbrace \tau_1=\tau_2 \rbrace=1$ and $$P \lbrace \sup_{0 \leq s \leq \tau_1} |X_{1}^\varepsilon(s)-X_{2}^\varepsilon(s)|=0 \rbrace =1.$$
\end{Theorem}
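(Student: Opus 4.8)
The plan is to \emph{localise} the problem to the ball $\{|x|\le N\}$, where the two drifts and the two diffusion coefficients coincide, and then to run a Gronwall estimate for the difference of the two \emph{stopped} processes; the equality of the exit times will then be read off from the equality of the paths. First I set $\tau:=\tau_1\wedge\tau_2$ and observe that for $s\le\tau$ both $|X_1^\varepsilon(s)|\le N$ and $|X_2^\varepsilon(s)|\le N$, so the hypothesis that $b_1=b_2$ and $\sigma_1=\sigma_2$ on $\{|x|\le N\}$ allows me to replace $b_2(s,X_2^\varepsilon(s))$ by $b_1(s,X_2^\varepsilon(s))$ (and similarly for $\sigma$) inside the integrals up to time $\tau$. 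This gives the identity
\[
X_1^\varepsilon(t\wedge\tau)-X_2^\varepsilon(t\wedge\tau)=\int_0^{t\wedge\tau}\big(b_1(s,X_1^\varepsilon(s))-b_1(s,X_2^\varepsilon(s))\big)\,ds+\varepsilon\int_0^{t\wedge\tau}\big(\sigma_1(s,X_1^\varepsilon(s))-\sigma_1(s,X_2^\varepsilon(s))\big)\,dW_s,
\]
in which only the \emph{Lipschitz} coefficients $b_1,\sigma_1$ appear.

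Next I would take $E|\cdot|^2$, bound the drift term by the Cauchy--Schwarz inequality and the martingale term by the It\^o isometry (applied to the stopped integral), and use the Lipschitz constant $L_T$ together with $|X_i^\varepsilon(s)|=|X_i^\varepsilon(s\wedge\tau)|$ for $s\le\tau$. Writing $g(t):=E\big|X_1^\varepsilon(t\wedge\tau)-X_2^\varepsilon(t\wedge\tau)\big|^2$, this yields
\[
g(t)\le 2L_T^2(T+\varepsilon^2)\int_0^t g(s)\,ds,\qquad t\in[0,T].
\]
Applying the Gronwall Lemma~\ref{Gronwall-Lemma} with $C=0$ forces $g\equiv 0$, i.e. $X_1^\varepsilon(t\wedge\tau)=X_2^\varepsilon(t\wedge\tau)$ almost surely for each $t$, and continuity of the paths upgrades this to $P\{\sup_{0\le s\le\tau}|X_1^\varepsilon(s)-X_2^\varepsilon(s)|=0\}=1$. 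The integrability needed to treat the stopped stochastic integral as a genuine mean-zero martingale follows from the linear-growth bound and $E|x_0|^2<\infty$, exactly as in the a~priori estimate \eqref{(1.5)} of Theorem~\ref{Theorem 1.2}.

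It remains to identify the exit times. Assume, for contradiction, that $P\{\tau_2<\tau_1\}>0$; on this event $\tau=\tau_2$ and, by the coincidence just established together with path continuity, $X_1^\varepsilon(\tau_2)=X_2^\varepsilon(\tau_2)$ with $|X_2^\varepsilon(\tau_2)|=N$, so the two solutions leave $\{|x|\le N\}$ from the \emph{same} boundary point $\xi$ at the \emph{same} instant. Restarting both equations at time $\tau_2$ from $\xi$ (strong Markov property) and repeating the Gronwall argument shows the restarted solutions again coincide until they leave the ball, which contradicts $\tau_2<\tau_1$; the symmetric case is identical, so $P\{\tau_1=\tau_2\}=1$. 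The \textbf{main obstacle} is precisely this last step: purely path-theoretically, coincidence on $[0,\tau_1\wedge\tau_2]$ does \emph{not} by itself give equality of the first-exceedance times, because of the delicate boundary behaviour on $\{|x|=N\}$ (a path may touch the sphere and return without crossing). Ruling this out requires invoking pathwise uniqueness via the restart/strong-Markov argument above rather than a bare continuity argument, and it is here that the care in the proof must be concentrated.
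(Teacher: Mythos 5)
Your first step is sound: stopping at $\tau_1\wedge\tau_2$, replacing $b_2,\sigma_2$ by $b_1,\sigma_1$ inside the ball, and running Gronwall with $C=0$ does give coincidence of the paths on $[0,\tau_1\wedge\tau_2]$. The genuine gap is exactly where you placed the ``main obstacle'', and your proposed repair does not close it. On the event $\{\tau_2<\tau_1\}$ the restarted processes both start from the boundary point $\xi$ with $|\xi|=N$, but the restarted second process exceeds $N$ \emph{immediately}: its own exit time from the ball is $0$. Hence ``the restarted solutions coincide until they leave the ball'' is the vacuous statement that they agree at the single instant $u=0$, because your Gronwall argument -- which again stops at the \emph{minimum} of the two exit times -- runs over a degenerate interval. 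No contradiction with $\tau_2<\tau_1$ follows: two solutions of SDEs whose coefficients agree only on the closed ball may perfectly well split apart instantly when started on the sphere $\{|x|=N\}$, and that is precisely what the two-sided stopping argument cannot rule out. The restart inherits the same defect it was meant to cure, so as written the proof of $P\{\tau_1=\tau_2\}=1$ fails.

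The paper avoids this dead end by localising on \emph{one side only}. It introduces $\gamma_1(t)=1$ if $\sup_{0\le s\le t}|X_1^{\varepsilon}(s)|\le N$ and $\gamma_1(t)=0$ otherwise -- an indicator depending on $X_1^{\varepsilon}$ alone -- and writes $b_1(s,X_1^{\varepsilon})-b_2(s,X_2^{\varepsilon})=[b_1(s,X_1^{\varepsilon})-b_2(s,X_1^{\varepsilon})]+[b_2(s,X_1^{\varepsilon})-b_2(s,X_2^{\varepsilon})]$ (and likewise for $\sigma$). Under $\gamma_1$ the first bracket vanishes, since $X_1^{\varepsilon}$ stays where the coefficients agree; the second bracket is handled by the \emph{global} Lipschitz hypothesis on $b_2,\sigma_2$ (the theorem assumes global, not local, Lipschitz bounds), so no localisation of $X_2^{\varepsilon}$ is needed at all. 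Gronwall then yields $E\big[\gamma_1(t)|X_1^{\varepsilon}(t)-X_2^{\varepsilon}(t)|^2\big]=0$, i.e.\ coincidence on all of $[0,\tau_1]$, not merely on $[0,\tau_1\wedge\tau_2]$. This makes the exit-time identification trivial: from $X_2^{\varepsilon}=X_1^{\varepsilon}$ on $[0,\tau_1]$ one gets $\sup_{0\le s\le\tau_1}|X_2^{\varepsilon}(s)|\le N$, hence $\tau_2\ge\tau_1$ a.s.\ by the definition of $\tau_2$, and interchanging the indices gives $\tau_1\ge\tau_2$. The delicate boundary behaviour never has to be confronted. If you prefer your stopped-process formulation, the same idea works: stop at $t\wedge\tau_1$ only, use that $b_1=b_2$, $\sigma_1=\sigma_2$ along $X_1^{\varepsilon}$ up to $\tau_1$, and apply the global Lipschitz property of $b_2,\sigma_2$ to compare the two processes.
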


\begin{proof}
Define $\gamma_1(t):= 1$, if $\sup_{0 \leq s \leq t} |X_{1}^{\varepsilon}(s)|\leq N,$ and $\gamma_1(t):=0$ otherwise.\\
Then we get
\begin{align*}
\gamma_1(t)  [X_{1}^{\varepsilon}(t)-X_{2}^{\varepsilon}(t)] 
&= \gamma_1(t) \int_{0}^{t} [ b_1(s,X_{1}^\varepsilon(s))-b_2(s,X_{2}^\varepsilon(s))] ds \\
&+ \gamma_1(t) \varepsilon \int_{0}^{t} [ \sigma_1(s,X_{1}^\varepsilon(s))-\sigma_2(s,X_{2}^\varepsilon(s))] dW_s  \\
&= \gamma_1(t) \int_{0}^{t} [ b_1(s,X_{1}^\varepsilon(s))-b_2(s,X_{1}^\varepsilon(s))] ds  \\
&+ \gamma_1(t) \varepsilon \int_{0}^{t} [ \sigma_1(s,X_{1}^\varepsilon(s))-\sigma_2(s,X_{1}^\varepsilon(s))] dW_s \\
&+ \gamma_1(t) \int_{0}^{t} [ b_2(s,X_{1}^\varepsilon(s))-b_2(s,X_{2}^\varepsilon(s))] ds  \\
&+ \gamma_1(t) \varepsilon \int_{0}^{t} [ \sigma_2(s,X_{1}^\varepsilon(s))-\sigma_2(s,X_{2}^\varepsilon(s))] dW_s \\
&= \gamma_1(t) \int_{0}^{t} [ b_2(s,X_{1}^\varepsilon(s))-b_2(s,X_{2}^\varepsilon(s))] ds  \\
&+ \gamma_1(t) \varepsilon  \int_{0}^{t} [ \sigma_1(s,X_{1}^\varepsilon(s))-\sigma_2(s,X_{1}^\varepsilon(s))] dW_s  .
\end{align*}
Thus 
\begin{align*}
\gamma_1(t)[ X_{1}^{\varepsilon}(t)- X_{2}^{\varepsilon}(t) ]^2 
& \leq 2 \gamma_1(t) \Big [\int_{0}^{t} [ b_2(s,X_{1}^\varepsilon(s))-b_2(s,X_{2}^\varepsilon(s))] ds \Big ]^2  \\
&+ 2 \gamma_1(t) \varepsilon^2 \Big [ \int_{0}^{t} [ \sigma_1(s,X_{1}^\varepsilon(s))-\sigma_2(s,X_{1}^\varepsilon(s))] dW_s  \Big ]^2.
\end{align*}
Taking into account that $\gamma_1(t)=1$ implies $\gamma_1(s)=1$ for $s \leq t$ we can write the $\gamma_1(s)$ inside the brackets. Taking the expectation and then using the Lipschitz condition and Cauchy-Schwarz-inequality, we can show that for $\varepsilon \leq 1$ there exists a constant $l$ for which
\begin{align*}
E \Big [ \gamma_1(t)[ X_{1}^{\varepsilon}(t)- X_{2}^{\varepsilon}(t) ]^2  \Big ]
& \leq E \Big [4 [\int_{0}^{t} \gamma_1(s) L_T |X_{1}^\varepsilon(s)-X_{2}^\varepsilon(s)|ds]^2 \Big ] \\
& \leq 4 L_T^2 t \int_{0}^{t} E [\gamma_1(s)|X_{1}^\varepsilon(s)-X_{2}^\varepsilon(s)|^2]ds\\
& = l \int_{0}^{t} E[\gamma_1(s)|X_{1}^\varepsilon(s)-X_{2}^\varepsilon(s)|^2]ds\,,
\end{align*}
then, by Lemma \ref{Gronwall-Lemma}, for which C=0,
we have:

$$E\Big[\gamma_1(t)|X_{1}^\varepsilon(t)-X_{2}^\varepsilon(t)|^2\Big]=0.$$

Considering the continuity of $X_{1}^\varepsilon(t)$ and $X_{2}^\varepsilon(t)$ we can then establish
$$ P \lbrace \sup_{0 \leq t \leq T} \gamma_1(t)  |X_{1}^{\varepsilon}(t)-X_{2}^{\varepsilon}(t)|^2 =0 \rbrace =1 $$
On the interval $[0,\tau_1]$ the processes $X_1^\varepsilon(t)$ and $X_2^\varepsilon(t)$ coincide with probability 1. Hence $P \lbrace \tau_2 \geq \tau_1 \rbrace =1.$
Interchanging the indices 1 and 2 in the proof of the theorem, we analogously show that $P \lbrace \tau_1 \geq \tau_2 \rbrace =1.$
\end{proof}

\begin{Theorem}
	\label{Existence, Uniqueness}
	Let the coefficients of \eqref{perturbed sde} be defined and measurable for $t \in [0,T]$, and satisfy the conditions: 
	\begin{enumerate}
		\item  for every $T>0$, there exists a constant $K_T$ such that for all $(t,x)\in [0,T]\times\mathbb{R}^r$
	\begin{equation}
	\label{dissipativity}
	<x,b(t,x)> + |\sigma(t,x)|^2 \leq K_T^2(1+ |x|^2);
	\end{equation}
		\item 
		for every N and $T>0$ there exists an $L_{N,T}$ for which
		\begin{equation}
		\label{local Lipschitz}
		|b(t,x)-b(t,y)| + |\sigma(t,x)-\sigma(t,y)| \leq L_{N,T}|x-y|
		\end{equation}
		with $|x| \leq N$, $|y| \leq N$.
	\end{enumerate} 
Then \eqref{perturbed sde} has a unique solution in the sense that for two solutions $X_1^{\varepsilon}(t)$ and $X_2^{\varepsilon}(t)$ it holds
$$  P\lbrace \sup_{0 \leq t \leq T} |X_1^{\varepsilon}(t)- X_2^{\varepsilon}(t)|=0  \rbrace =1. $$
\end{Theorem}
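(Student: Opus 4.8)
The plan is to reduce the problem to the globally Lipschitz setting already covered by Theorem \ref{Theorem 2 GS} through a truncation-and-localization argument, and to use the dissipativity bound \eqref{dissipativity} only to rule out explosion in finite time. First I would, for each $N\in\mathbb{N}$, introduce a cut-off $\theta_N:\mathbb{R}^r\to\mathbb{R}^r$ with $\theta_N(x)=x$ for $|x|\le N$ and $|\theta_N(x)|\le N+1$ for all $x$, and set $b^{(N)}(t,x):=b(t,\theta_N(x))$, $\sigma^{(N)}(t,x):=\sigma(t,\theta_N(x))$. Since $b,\sigma$ are locally Lipschitz with constant $L_{N+1,T}$ on the ball of radius $N+1$ by \eqref{local Lipschitz} and $\theta_N$ has bounded range and is globally Lipschitz, the truncated coefficients satisfy a global Lipschitz condition and a linear growth bound; hence the classical theory (as in \cite{GS 3}) yields a unique solution $X^{\varepsilon}_{(N)}$ of the SDE with coefficients $b^{(N)},\sigma^{(N)}$ and datum $x_0$.

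Next I would exploit that $b^{(N)}=b^{(M)}$ and $\sigma^{(N)}=\sigma^{(M)}$ on $\{|x|\le N\}$ for $M>N$. Denoting by $\tau_N$ the largest $t$ with $\sup_{0\le s\le t}|X^{\varepsilon}_{(N)}(s)|\le N$, Theorem \ref{Theorem 2 GS} applied to the pairs $(b^{(N)},\sigma^{(N)})$ and $(b^{(M)},\sigma^{(M)})$ shows that the exit times coincide and that the two truncated solutions agree up to $\tau_N$ with probability one. This consistency lets me define a single process $X^{\varepsilon}(t):=X^{\varepsilon}_{(N)}(t)$ for $t\le\tau_N$, unambiguously on $[0,\sup_N\tau_N)$, which solves \eqref{perturbed sde} up to that horizon because on $\{t\le\tau_N\}$ the truncated coefficients coincide with $b,\sigma$.

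The crux is to show that $\tau_N$ exceeds $T$ with probability tending to one, i.e. no explosion. For this I would apply It\^o's formula to $|X^{\varepsilon}(t\wedge\tau_N)|^2$, take expectations so that the martingale part drops out (stopping at $\tau_N$ keeps the integrand bounded), and invoke \eqref{dissipativity}: for $\varepsilon\le 1$,
\[
2\langle x,b(t,x)\rangle+\varepsilon^2|\sigma(t,x)|^2\le 2\big(\langle x,b(t,x)\rangle+|\sigma(t,x)|^2\big)\le 2K_T^2(1+|x|^2).
\]
This gives $1+E|X^{\varepsilon}(t\wedge\tau_N)|^2\le(1+E|x_0|^2)+2K_T^2\int_0^t\big(1+E|X^{\varepsilon}(s\wedge\tau_N)|^2\big)\,ds$, and Lemma \ref{Gronwall-Lemma} yields the bound $1+E|X^{\varepsilon}(t\wedge\tau_N)|^2\le(1+E|x_0|^2)e^{2K_T^2 t}$, uniform in $N$. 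Since $|X^{\varepsilon}(\tau_N)|=N$ on $\{\tau_N\le t\}$ by path continuity, Chebyshev gives $P\{\tau_N\le T\}\le N^{-2}(1+E|x_0|^2)e^{2K_T^2 T}\to 0$, so $\tau_N\uparrow\infty$ a.s. and the pasted process is a global solution on $[0,T]$.

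Finally, uniqueness follows by the same localization. Given two solutions $X^{\varepsilon}_1,X^{\varepsilon}_2$ of \eqref{perturbed sde} with the common initial datum, each coincides on $[0,\tau_N^i]$ with the solution of the truncated equation, so Theorem \ref{Theorem 2 GS} with $b_1=b_2=b^{(N)}$ and $\sigma_1=\sigma_2=\sigma^{(N)}$ forces $\tau_N^1=\tau_N^2=:\tau_N$ and $\sup_{0\le t\le\tau_N}|X^{\varepsilon}_1(t)-X^{\varepsilon}_2(t)|=0$ a.s.; the non-explosion estimate above (which applies to every solution) gives $P\{\tau_N\ge T\}\to 1$, and letting $N\to\infty$ produces $P\{\sup_{0\le t\le T}|X^{\varepsilon}_1(t)-X^{\varepsilon}_2(t)|=0\}=1$. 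I expect the genuine difficulty to be this non-explosion step: one must localize carefully so that the stochastic integral is a true zero-mean martingale, and it is precisely here — rather than in any Lipschitz estimate — that the dissipativity hypothesis \eqref{dissipativity}, replacing a linear bound on $b$ itself, is essential.
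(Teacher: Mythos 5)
Your overall strategy --- truncate the coefficients, use Theorem \ref{Theorem 2 GS} to show consistency of the truncated solutions, prove non-explosion from the dissipativity bound \eqref{dissipativity}, and localize for uniqueness --- is exactly the route the paper takes. But there is one genuine gap: your non-explosion step silently assumes $E|x_0|^2<\infty$. Neither the model section nor the statement of the theorem imposes any moment condition on the initial value $x_0$ (it is only required to be an $\mathbb{R}^r$-valued random variable independent of $W$), and if $E|x_0|^2=\infty$ your Gronwall bound $1+E|X^\varepsilon(t\wedge\tau_N)|^2\le(1+E|x_0|^2)e^{2K_T^2t}$ and the resulting Chebyshev estimate $P\{\tau_N\le T\}\le N^{-2}(1+E|x_0|^2)e^{2K_T^2T}$ are vacuous. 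This is precisely the difficulty the paper's proof is engineered to avoid: it multiplies by the weight $\psi(x_0)=\frac{1}{1+|x_0|^2}$ before taking expectations, derives via It\^o's formula and Lemma \ref{Gronwall-Lemma} a bound on $E[\psi(x_0)\sup_{0\le t\le T}|X_N^\varepsilon(t)|^2]$ that is finite with no assumption on $x_0$, and then concludes with the two-term Chebyshev splitting $P\{\sup_{0\le t\le T}|X_N^\varepsilon(t)|>N\}\le \frac{C_1}{\delta N^2}+P\{\psi(x_0)\le\delta\}$, letting first $N\to\infty$ and then $\delta\to 0$, using $P\{\psi(x_0)=0\}=0$. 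Under the extra hypothesis $E|x_0|^2<\infty$ your stopped-process argument is correct (and indeed cleaner, giving an explicit $O(N^{-2})$ rate), but as the theorem is stated you must either add that hypothesis or reproduce the weighting device.

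A secondary soft spot is your uniqueness step: you invoke Theorem \ref{Theorem 2 GS} to identify an arbitrary solution $X_i^\varepsilon$ of \eqref{perturbed sde} with the truncated solution up to $\tau_N^i$, but Theorem \ref{Theorem 2 GS} requires \emph{both} pairs of coefficients to be globally Lipschitz with linear growth, which the original $b,\sigma$ are not. The identification is true, but it needs a separate argument; the paper sidesteps this by comparing the two solutions $X_1^\varepsilon, X_2^\varepsilon$ directly: it introduces the indicator $\phi(t)$ of the event that both stay in the ball of radius $N$ up to time $t$, applies the local Lipschitz condition \eqref{local Lipschitz}, Cauchy--Schwarz and the It\^o isometry, and Lemma \ref{Gronwall-Lemma} with $C=0$ to get $E[\phi(t)|X_1^\varepsilon(t)-X_2^\varepsilon(t)|^2]=0$; then $P\{X_1^\varepsilon(t)\ne X_2^\varepsilon(t)\}\le P\{\sup_{0\le t\le T}|X_1^\varepsilon(t)|>N\}+P\{\sup_{0\le t\le T}|X_2^\varepsilon(t)|>N\}$, and the right-hand side tends to $0$ as $N\to\infty$ simply because continuous paths have a.s.\ finite supremum on $[0,T]$ --- no moment bound and no appeal to Theorem \ref{Theorem 2 GS} is needed there. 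I recommend restructuring your final paragraph along these lines.
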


\begin{proof} 
	We will first start showing the existence of a solution, then  we prove its uniqueness. Let us define  
	\[
	b_N(t,x)=
	\begin{cases}
	b(t,x) \quad if \quad |x|\leq N\\
	b(t,Nsign(x)) \quad if \quad |x|>N
	\end{cases}
	\]
$\sigma_N(t,x)$ is defined similarly.
	By $X_N^\varepsilon(t)$ we denote the solution of
	\begin{equation}
	\label{approximate-diffusion}
	dX_N^\varepsilon(t) =b_N(t,X_N^\varepsilon(t))+ \varepsilon \sigma_N(t,X_N^\varepsilon(t))dW_t, \ X_N^\varepsilon (0)=x_0\,.
	\end{equation}
For the latter equation all conditions for existence and uniqueness are given, since can count on a growth condition and global Lipschitz condition.
Let $\tau_N$ be the largest value of t for which $\sup_{0 \leq s \leq t}|X_N^\varepsilon(s)| \leq N$. Let $N^{'} >N$. Since $b_N(t,x)=b_{N^{'}}(t,x)$ and $\sigma_N(t,x)=\sigma_{N^{'}}(t,x)$ for all $|x|\leq N$ ,now we apply Theorem \ref{Theorem 2 GS} and obtain
$X_N^\varepsilon(t)=X_{N^{'}}^\varepsilon(t)$, with probability 1 for $t \in [0,\tau_N]$.\\
Hence for $N^{'}>N$:
	$$ P \lbrace \sup_{0 \leq t \leq T}|X_N^\varepsilon(t)-X_{N^{'}}^\varepsilon(t)|>0 \rbrace \leq P \lbrace \tau_N > T \rbrace= P \lbrace \sup_{0 \leq t \leq T}|X_N^\varepsilon(t)|>N \rbrace. $$
	
If we show that the probability on the right hand side converges to zero for $N \rightarrow \infty$, then it will  follow that $X_N^\varepsilon$ converges uniformly with probability 1 to some limit $X^\varepsilon(t)$ as $N \rightarrow \infty$.\\
Going to the limit in 
	$$
	X_{N}^\varepsilon(t) =x_0+ \int_{0}^{t} b_N(s,X_{N}^\varepsilon(s))ds+ \varepsilon \int_{0}^{t} \sigma_N(s,X_{N}^\varepsilon(s)) dW_s$$
	we convince ourselves that $X^\varepsilon(t)$ is equal with probability 1 to the continuous solution of \eqref{perturbed sde}. Therefore,	 to finish the proof of the existence of a solution, we are left with proving that: 

\begin{equation}
\label{probability N}
\lim_{N \rightarrow \infty}
P \lbrace \sup_{0 \leq t \leq T}|X_N^\varepsilon(t)|>N \rbrace=0\,,
\end{equation}
and to this aim, we first define the function $\psi(y)=\frac{1}{1+|y|^2}$, to then use the Ito formula.
We obtain
\begin{align*}
	&E[|X_N^\varepsilon(t)|^2 \psi(x_0)]-E[|x_0|^2\psi(x_0)] \\
	=& E\Big \lbrack  \int_{0}^{t} [2 \psi(x_0) <X_N^\varepsilon(s),b(t,X_N^\varepsilon(s))> + \varepsilon^2 \psi(x_0)  [\sigma_N(s,X_N^\varepsilon(s)]^2 ]ds \Big \rbrack   \\
	 \leq & E\Big \lbrack \psi(x_0) \int_{0}^{t} [2K_T^2(1+|X_N^\varepsilon(s)|^2) + \varepsilon^2  K_T^2(1+|X_N^\varepsilon(s)|^2)] ds \Big \rbrack   \\
	 \leq &   (2K_T^2t+\varepsilon^2K_T^2 t)E\psi(x_0)+ (2K_T^2+\varepsilon^2K_T^2)\int_{0}^{t} E[\psi(x_0) |X_N^\varepsilon(s)|^2] ds 	
\end{align*}	 
	By having that we can use Lemma \ref{Gronwall-Lemma} to get
	$$	E[\psi (x_0)|X_N^\varepsilon(t)|^2] 
	\leq [(2K_T^2t+\varepsilon^2K_T^2t)E\psi(x_0)+E(|x_0|^2 \psi(x_0)) ]e^{(2K_T^2+ \varepsilon^2K_T^2)t}.$$
	Which means we have 
	$$E[\psi (x_0) \sup_{0 \leq t \leq T}|X_N^\varepsilon(t)|^2] \leq C_1$$
where $C_1$ is independent of N.\\
We can moreover write
\begin{align*}
P \lbrace \sup_{0 \leq t \leq T} |X_N^\varepsilon(t)|>N \rbrace 
&= P \lbrace \psi(x_0)\sup_{0 \leq t \leq T} |X_N^\varepsilon(t)|^2>N^2 \psi(x_0) \rbrace \\
&\leq P \lbrace \psi(x_0)\sup_{0 \leq t \leq T} |X_N^\varepsilon(t)|^2> \delta N^2  \rbrace + P \lbrace \psi(x_0) \leq \delta \rbrace \\
& \leq \frac{C_1}{\delta N^2}+  P \lbrace \psi(x_0) \leq \delta \rbrace,
\end{align*}
where the last inequality follows from the Chebychev one. \\
Consequently
$$\overline{\lim\limits_{N \rightarrow \infty}} P \lbrace \sup_{0 \leq t \leq T} |X_N^\varepsilon(t)|>N \rbrace \leq P \lbrace \psi (x_0) \leq \delta \rbrace. $$	
Since $\delta$ is an arbitrary positive number and
\[
P \lbrace \psi(x_0)=0 \rbrace =0,
\]
\eqref{probability N}
results from the preceding relation, which completes the proof of the existence of a solution to \eqref{perturbed sde}.\\
Next we want to prove the uniqueness of the solution.
Let $X_1^\varepsilon(t)$ and $X_2^\varepsilon(t)$ be two solutions of \eqref{perturbed sde}. Denoting by $\phi(t)$ the variable equal to 1 if 
$\sup_{0 \leq s \leq t}|X_1^\varepsilon(s)|\leq N$ and $\sup_{0 \leq s \leq t}|X_2^\varepsilon(s)|\leq N$ and equal to 0 otherwise, then by the  second condition, we have:
\begin{align*}
E|X_1^{\varepsilon}(t)-X_2^\varepsilon(t)|^2 \phi(t)
& \leq 2 E[\phi(t)(\int_{0}^{t} |b(s,X_1^{\varepsilon}(s))-b(s,X_2^{\varepsilon}(s))| ds)^2] \\
& + 2 E[\phi(t)(\int_{0}^{t} |\sigma(s,X_1^{\varepsilon}(s))-\sigma(s,X_2^{\varepsilon}(s))| dW_s)^2] \\
& \leq  2t E[(\int_{0}^{t} \phi(s) |b(s,X_1^{\varepsilon}(s))-b(s,X_2^{\varepsilon}(s))|^2 ds)] \\
& + 2 E[(\int_{0}^{t} \phi(s) |\sigma(s,X_1^{\varepsilon}(s))-\sigma(s,X_2^{\varepsilon}(s))|^2 ds)] \\
& \leq (2T+2) L_{N,T}^2 \int_{0}^{t} E(\phi(s)|X_1^\varepsilon(s)-X_2^\varepsilon(s)|^2) ds\,,
\end{align*}
where we  used that $(a-b)^2 \leq 2a^2+2b^2$, then we applied the Cauchy-Schwarz inequality and the Itô integral properties, afterwards exploiting the local Lipschitz continuity.
Then, by Lemma \ref{Gronwall-Lemma} with $C=0$, we obtain 
$E|X_1^{\varepsilon}(t)-X_2^\varepsilon(t)|^2 \phi(t)=0$, implying:
$$P\lbrace X_1^{\varepsilon}(t) \neq X_2^{\varepsilon}(t) \rbrace \leq 
P\lbrace \sup_{0 \leq t \leq T} |X_1^{\varepsilon}(t)|>N \rbrace + P \lbrace \sup_{0 \leq t \leq T} |X_2^{\varepsilon}(t)|>N \rbrace\,,  $$
indeed,   $\phi (t)$ is zero for 
$\sup_{0 \leq s \leq t}|X_1^{\varepsilon}(s)| >N$ or $\sup_{0 \leq s \leq t}|X_2^{\varepsilon}(s)|> N$.
 Hence the probability on the right side of this inequality tend to zero as N $\rightarrow \infty$, i.e., for all $t \in [0,T]$: $P \lbrace X_1^\varepsilon(t)= X_2^\varepsilon(t) \rbrace =1 $, implying uniqueness in the sense that 
   $P\lbrace \sup_{0 \leq t \leq T} |X_1^{\varepsilon}(t)- X_2^{\varepsilon}(t)|=0  \rbrace =1. $
\end{proof}

\subsection{Zeroth Order Approximation for Dissipative Case}
Considering the existence and uniqueness result proved in the  previous section  we aim to prove that the solution of the perturbed SDE \eqref{perturbed sde}  converges to the solution of the unperturbed one \eqref{ode} as the perturbation parameter $\varepsilon \to 0$, under dissipativity and dissipativity for differences for the drift vector and the local Lipschitz condition for all coefficients.

\begin{Theorem} \label{Theorem 1.2.2}
	Assume that the coefficients of \eqref{perturbed sde} satisfy a local Lipschitz condition, $\sigma$ increases no faster than linearly and b satisfies dissipativity and dissipativity for the differences:
	
		\begin{enumerate}
		\item  for every $T>0$ there exists a constant $K_T$ such that for all $x,y \in \mathbb{R}^r$ and $t \in [0,T]$
		\begin{align}
		\label{dissipativity 2}
		<x,b(t,x)> + |\sigma(t,x)|^2 &\leq K_T^2(1+ |x|^2);
		\\
		<x-y,b(t,x)-b(t,y)> &\leq K_T^2(1+ |x-y|^2);
		\label{dissipativity for differences}
		\end{align}
		\item 
		for every N and $T>0$ there exists an $L_{N,T}$ for which
		\begin{equation}
		\label{local Lipschitz 2}
		|b(t,x)-b(t,y)| + |\sigma(t,x)-\sigma(t,y)| \leq L_{N,T}|x-y|
		\end{equation}
		with $|x| \leq N$, $|y| \leq N$.
	\end{enumerate} 
	
	Then for all $t>0$ and $\delta>0$ we have:
	\begin{enumerate}
		\item $E|X^\varepsilon(t)-x(t)|^2\leq \varepsilon^2 a(t)$, and
		\item $\lim_{\varepsilon \to 0} P\lbrace \max_{0 \leq s \leq t} |X^\varepsilon(s) -x(s)|>  \delta \rbrace=0$
	\end{enumerate}
where a(t) is a monotone increasing function expressed in terms of $E|x_0|$, $L_{N,T}$ and $K_T$.
	
\end{Theorem}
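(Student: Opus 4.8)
The plan is to follow the same two-step scheme as in the proof of Theorem~\ref{Theorem 1.2}, replacing the global Lipschitz condition, which is no longer available, by the dissipativity hypotheses \eqref{dissipativity 2}--\eqref{dissipativity for differences}. As a preliminary step I would first establish a uniform-in-$\varepsilon$ second moment bound of the form $1+E|X^\varepsilon(t)|^2 \leq (1+E|x_0|^2)e^{2K_T^2 t}$ for $\varepsilon\in[0,1]$, exactly as in \eqref{(1.5)}: apply It\^o's formula to $|X^\varepsilon(t)|^2$, take expectations, and use \eqref{dissipativity 2} to dominate the drift inner product together with the diffusion term by $2K_T^2(1+|X^\varepsilon(s)|^2)$ (here $\varepsilon\le 1$ lets the factor $\varepsilon^2$ in front of $|\sigma|^2$ be absorbed), then invoke Gr\"onwall (Lemma~\ref{Gronwall-Lemma}). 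Because the coefficients are only locally Lipschitz, this step must be carried out along the stopping times $\tau_N=\inf\{t:|X^\varepsilon(t)|\ge N\}$ supplied by the existence argument of Theorem~\ref{Existence, Uniqueness}, the bound being recovered by letting $N\to\infty$ via Fatou.

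For the first assertion I would apply It\^o's formula to $|X^\varepsilon(t)-x(t)|^2$, using that $X^\varepsilon$ solves \eqref{perturbed sde} and $x$ solves \eqref{ode}, to obtain
\[
E|X^\varepsilon(t)-x(t)|^2 = 2\int_0^t E\langle X^\varepsilon(s)-x(s),\,b(s,X^\varepsilon(s))-b(s,x(s))\rangle\,ds + \varepsilon^2\int_0^t E|\sigma(s,X^\varepsilon(s))|^2\,ds.
\]
The dissipativity-for-differences condition \eqref{dissipativity for differences} is precisely what controls the drift cross term, yielding a bound proportional to $\int_0^t E|X^\varepsilon(s)-x(s)|^2\,ds$ (the coefficient $\alpha$ in Gr\"onwall), while the linear growth of $\sigma$ from \eqref{dissipativity 2} together with the moment bound above makes the diffusion term a quantity of order $\varepsilon^2$ (the constant $C$ in Gr\"onwall). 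Applying Lemma~\ref{Gronwall-Lemma} with $m(t)=E|X^\varepsilon(t)-x(t)|^2$ then produces $E|X^\varepsilon(t)-x(t)|^2\le \varepsilon^2 a(t)$ with $a$ monotone increasing and expressed through $E|x_0|$, $K_T$, and $L_{N,T}$. For the second assertion I would reproduce the splitting \eqref{(1.6)}, bounding $\max_{0\le s\le t}|X^\varepsilon(s)-x(s)|$ by the drift integral $\int_0^t|b(s,X^\varepsilon(s))-b(s,x(s))|\,ds$ plus $\varepsilon\max_{0\le s\le t}|\int_0^s\sigma(v,X^\varepsilon(v))\,dW_v|$; the stochastic-integral part is handled exactly as in \eqref{(1.8)} by the Kolmogorov inequality \eqref{KolmogorovIntegral}, the linear growth of $\sigma$ and the moment bound, and the drift part by Chebyshev and Cauchy--Schwarz as in \eqref{(1.7)}, reducing it to $\int_0^t E|b(s,X^\varepsilon(s))-b(s,x(s))|^2\,ds$ and then to the first assertion.

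The main obstacle is exactly this last reduction: in Theorem~\ref{Theorem 1.2} the step $|b(s,X^\varepsilon)-b(s,x)|^2\le L_T^2|X^\varepsilon-x|^2$ used a \emph{global} Lipschitz constant, whereas here \eqref{local Lipschitz 2} only provides $L_{N,T}$ valid on $\{|x|\le N,|y|\le N\}$. I would circumvent this by noting first that the deterministic solution $x(\cdot)$ of \eqref{ode} stays in a fixed ball $\{|y|\le M_0\}$ on $[0,T]$, and then decomposing $E|b(s,X^\varepsilon)-b(s,x)|^2$ over the event $\{\sup_{s\le t}|X^\varepsilon(s)|\le N\}$, where the local Lipschitz bound with constant $L_{N,T}$ applies, and its complement, where the linear growth \eqref{dissipativity 2} together with the moment bound and the smallness of $P\{\sup_{s\le t}|X^\varepsilon(s)|>N\}$ (established in Theorem~\ref{Existence, Uniqueness}) keeps the contribution negligible; sending $\varepsilon\to 0$ first and then $N\to\infty$ yields the claim. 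A secondary point requiring care is to ensure that the one-sided estimate coming from \eqref{dissipativity for differences} enters Gr\"onwall only through the integral $\int_0^t m(s)\,ds$, so that the leading error genuinely remains of order $\varepsilon^2$.
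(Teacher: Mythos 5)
There is a genuine gap, and it sits exactly at the point you dismiss as ``a secondary point requiring care.'' Your proof of the first assertion handles the drift cross term by applying \eqref{dissipativity for differences} directly, claiming it ``yields a bound proportional to $\int_0^t E|X^\varepsilon(s)-x(s)|^2\,ds$.'' It does not: \eqref{dissipativity for differences} gives $\langle X^\varepsilon(s)-x(s),\,b(s,X^\varepsilon(s))-b(s,x(s))\rangle \leq K_T^2\bigl(1+|X^\varepsilon(s)-x(s)|^2\bigr)$, and the ``$+1$'' contributes a term $2K_T^2 t$ to the Gronwall constant $C$ that carries no factor of $\varepsilon^2$. Gronwall then yields $E|X^\varepsilon(t)-x(t)|^2 \leq (2K_T^2t+O(\varepsilon^2))e^{2K_T^2t}$, which does not vanish as $\varepsilon\to 0$, so assertion 1 does not follow. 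This cannot be repaired by ``care'': the one-sided estimate alone is simply too weak. The paper's proof resolves it by deploying, \emph{inside} the It\^o/Gronwall argument for assertion 1, precisely the event-splitting you reserve for assertion 2: the expectation of the drift inner product is split over $G_N=\{\max\{\sup_{s\le T}|X^\varepsilon(s)|,\sup_{s\le T}|x(s)|\}\le N\}$ and its complement; on $G_N$ the local Lipschitz condition \eqref{local Lipschitz 2} gives the pure integral term $2L_{N,T}\int_0^t E|X^\varepsilon(s)-x(s)|^2ds$, while on $G_N^c$ one uses \eqref{dissipativity for differences}, whose troublesome ``$+1$'' is now multiplied by $P(G_N^c)\le \varepsilon^2$ thanks to \eqref{prob1}--\eqref{prob2} (note the order of choices is the reverse of yours: $\varepsilon$ is fixed first and $N=N(\varepsilon)$ is chosen so that the tail probabilities are at most $\varepsilon^2/2$).

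Your treatment of assertion 2 has a second, related flaw. After Chebyshev and Cauchy--Schwarz you reduce to $\int_0^t E|b(s,X^\varepsilon(s))-b(s,x(s))|^2ds$ and propose to control the contribution of the bad event $\{\sup_{s\le t}|X^\varepsilon(s)|>N\}$ by ``the linear growth \eqref{dissipativity 2}.'' But \eqref{dissipativity 2} bounds only the inner product $\langle x,b(t,x)\rangle$ (plus $|\sigma(t,x)|^2$); there is \emph{no} growth hypothesis on $|b|$ in this theorem --- e.g.\ $b(t,x)=-x|x|^2$ is admissible --- so $E\bigl[\mathbf{1}_{\{\sup|X^\varepsilon|>N\}}|b(s,X^\varepsilon(s))|^2\bigr]$ is not controllable by the second-moment bound \eqref{(1.5.2)}, and may even be infinite. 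A correct patch is to split the \emph{probability} before Chebyshev, i.e.\ $P\{A\}\le P\{A\cap G_N\}+P(G_N^c)$, so that the local Lipschitz constant $L_{N,T}$ is used only inside $G_N$ and $|b|$ never appears outside it; alternatively, the paper sidesteps the issue entirely by applying Chebyshev directly to $\max_{0\le s\le t}|X^\varepsilon(s)-x(s)|$ once assertion 1 is available (at the cost of silently replacing the expectation of the maximum by the maximum of expectations, a blemish in the paper's own write-up, but one that avoids any pointwise bound on $b$).
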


\begin{proof}
	We start showing that $E|X^\varepsilon(t)|^2$ is bounded uniformly in $\varepsilon \in [0,1]$.
	Applying Ito's formula  and mathematical expectation we get
		\[
	  1+ E|X^\varepsilon(t)|^2 
	  = 1+E|x_0|^2+2\int_0^t E < X^\varepsilon(s),b(s,X^{\varepsilon}(s))> ds +
	   \varepsilon^2\int_0^t E |\sigma(s,X^{\varepsilon}(s))|^2ds
	   \]
	
Using the fact that $\sigma$  increases no faster than linearly and the dissipativity for b, the last relation implies the estimate
	\begin{align*}
	1+ E|X^\varepsilon(t)|^2 
	  &= 1+E|x_0|^2+2\int_0^t E < X^\varepsilon(s),b(s,X^{\varepsilon}(s))> ds +
	   \varepsilon^2\int_0^t E |\sigma(s,X^{\varepsilon}(s))|^2ds \\
	& \leq 1+ E|x_0|^2+ 2 \int_{0}^{t} E [K_T^2(1+|X^\varepsilon(s)|^2)] ds
	+\varepsilon^2 \int_{0}^{t} E [K_T ^2(1+|X^\varepsilon(s)|^2)] ds\\
	& \leq  1+ E|x_0|^2+ (2 K_T^2 + \varepsilon^2 K_T^2) \int_{0}^{t} (1+E|X^\varepsilon(s)|^2)ds\,.
	\end{align*}
	
 Next we use Lemma \ref{Gronwall-Lemma}
	choosing $m(t)=1+M|X^\varepsilon(t)|^2$, $C=1+E|x_0|^2$ and $\alpha=(2K_T^2+\varepsilon^2K_T^2)$, to obtain:
	\begin{equation}
	\label{(1.5.2)}
	1+ E|X^\varepsilon(t)|^2 \leq (1+E|x_0|^2) \exp[(2K_T^2+\varepsilon^2K_T^2)t].
	\end{equation}
	
	Inequality \eqref{(1.5.2)} proves that $E|X^\varepsilon(t)|^2$ is bounded uniformly in $\varepsilon \in [0,1]$ and by previously stated result we  show, proceeding as before, that $E|X^\varepsilon(t)-x(t)|^2 \leq \varepsilon^2 a(t)$. Indeed, applying the Ito's formula to the function $|X^\varepsilon(t)-x(t)|^2$, then taking the mathematical expectation on both sides of the equality, we get:
 \begin{align*}
     E|X^\varepsilon(t)-x(t)|^2 &= 2 \int_{0}^{t} E <X^\varepsilon(s)-x(s), b(s,X^\varepsilon(s))-b(s,x(s))>ds \\
     & + \varepsilon^2\int_{0}^{t}E |\sigma(s,X^\varepsilon(s))|^2ds.
 \end{align*}

In the proof of the existence we proved \eqref{probability N}. Since $X_{N}^{\varepsilon}(t)$ converges to $X^\varepsilon(t)$ as $N \to \infty$, moreover we know that:
\begin{equation}
\lim\limits_{N \to \infty} P \lbrace \sup_{0 \leq s \leq T} |X^\varepsilon(s)| >N  \rbrace =0
\end{equation}
and
\begin{equation}
\lim\limits_{N \to \infty} P \lbrace \sup_{0 \leq s \leq T} |x(s)| >N  \rbrace =0\,.
\end{equation}
The latter implies that there exists an N, such that 
\begin{equation}
\label{prob1}
P \lbrace \sup_{0 \leq s \leq T} |X^\varepsilon(s)| >N  \rbrace \leq \frac{\varepsilon^2}{2}
\end{equation}
and 
\begin{equation}
\label{prob2}
P \lbrace \sup_{0 \leq s \leq T} |x(s)| >N  \rbrace \leq \frac{\varepsilon^2}{2}.
\end{equation}

In the following calculations we first split up our mathematical expectation in two different cases, then we use the Cauchy-Schwarz inequality and condition \eqref{dissipativity 2}.
Afterwards, we apply the local Lipschitz condition \eqref{local Lipschitz 2} and the dissipativity for the differences for the drift coefficient \eqref{dissipativity for differences}. Then we use inequalities \eqref{prob1} and \eqref{prob2} for the  probabilities:
\begin{align*} 
    \scalemath{0.85}{E|X^\varepsilon(t)-x(t)|^2} &= \scalemath{0.85}{ 2 \int_{0}^{t} E <X^\varepsilon(s)-x(s), b(s,X^\varepsilon(s))-b(s,x(s))>ds}\\
	&+\scalemath{0.85}{\varepsilon^2\int_{0}^{t}E |\sigma(s,X^\varepsilon(s))|^2ds}\\ 
	&\leq \scalemath{0.85}{P \lbrace \max \lbrace \sup_{0 \leq s \leq T} |X^\varepsilon(s)|, \sup_{0 \leq s \leq T}|x(s)| \rbrace \leq N \rbrace \times} \\
	& \scalemath{0.78}{2 \int_{0}^{t} E \Big [ \sqrt{|X^\varepsilon(s)-x(s)|^2 |b(s,X^\varepsilon(s))-b(s,x(s))|^2}ds \Big |
	 \max \lbrace \sup_{0 \leq s \leq T} |X^\varepsilon(s)|, \sup_{0 \leq s \leq T}|x(s)| \rbrace \leq N
	  \Big ]}\\
	&+ \scalemath{0.85}{ P \lbrace \max \lbrace \sup_{0 \leq s \leq T} |X^\varepsilon(s)|, \sup_{0 \leq s \leq T}|x(s)| \rbrace  > N \rbrace \times} \\
	& \scalemath{0.78}{2 \int_{0}^{t}
	E\Big [<X^\varepsilon(s)-x(s), b(s,X^\varepsilon(s))-b(s,x(s))> ds \Big |\max \lbrace \sup_{0 \leq s \leq T} |X^\varepsilon(s)|, \sup_{0 \leq s \leq T}|x(s)| \rbrace > N \Big ]}
	 \\ 
	&+ \scalemath{0.85}{\varepsilon^2 K_T^2 \int_{0}^{t}(1+E|X^\varepsilon(s)|^2) ds} \\	
	& \scalemath{0.78}{\leq P \lbrace \max \lbrace \sup_{0 \leq s \leq T} |X^\varepsilon(s)|, \sup_{0 \leq s \leq T}|x(s)| \rbrace \leq N \rbrace
	2 \int_{0}^{t} E \sqrt{|X^\varepsilon(s)-x(s)|^2 L_{N,T}^2 |X^\varepsilon(s)-x(s)|^2}ds} \\
	&+\scalemath{0.85}{ P \lbrace \max \lbrace \sup_{0 \leq s \leq T} |X^\varepsilon(s)|, \sup_{0 \leq s \leq T}|x(s)| \rbrace > N \rbrace
	2 \int_{0}^{t} K^2(1+E|X^\varepsilon(s)-x(s)|^2)ds} \\
	&+\scalemath{0.85}{\varepsilon^2 K_T^2 \int_{0}^{t}(1+E|X^\varepsilon(s)|^2) ds} \\ 
	&\leq
	\scalemath{0.85}{2 L_{N,T} \int_{0}^{t} E |X^\varepsilon(s)-x(s)|^2 ds} \\
	
	&+ \scalemath{0.78}{(P \lbrace \sup_{0 \leq s \leq T} |X^\varepsilon(s)|> N \rbrace+
	P \lbrace \sup_{0 \leq s \leq T} |x(s)|> N \rbrace)
	[2 K_T^2t+ 2 K_T^2 \int_{0}^{t} E|X^\varepsilon(s)-x(s)|^2 ds]} \\
	&+ \scalemath{0.85}{\varepsilon^2 K_T^2 \int_{0}^{t}(1+E|X^\varepsilon(s)|^2)ds}  \\ 
	&\leq
	\scalemath{0.85}{2 L_{N,T} \int_{0}^{t} E|X^\varepsilon(s)-x(s)|^2ds + \varepsilon^2
	2 K_T^2t}\\
 &+\scalemath{0.85}{ \varepsilon^2 2 K_T^2 \int_{0}^{t} E|X^\varepsilon(s)-x(s)|^2ds 
	+\varepsilon^2 K_T^2 \int_{0}^{t}(1+E|X^\varepsilon(s)|^2)ds} \\
	&\leq
	\scalemath{0.85}{(2 L_{N,T}+\varepsilon^2 2K_T^2) \int_{0}^{t}E|X^\varepsilon(s)-x(s)|^2 ds} \\
 &+\scalemath{0.85}{\varepsilon^2
	2 K_T^2t +\varepsilon^2 K_T^2 \int_{0}^{t}(1+E|X^\varepsilon(s)|^2) ds}.
\end{align*}
	
	By lemma \ref{Gronwall-Lemma},  choosing $m(t)=E|X^\varepsilon(t)-x(t)|^2, \ \alpha=2 L_{N,T}+2\varepsilon^2 K_T^2, \ C= \varepsilon^2
	2 K_T^2t +\varepsilon^2 K_T^2 \int_{0}^{t}(1+E|X^\varepsilon(s)|^2) ds$, we have:  
	\begin{align*}
	E|X^\varepsilon(t)-x(t)|^2 
	&\leq e^{(2 L_{N,T}+2\varepsilon^2 K_T^2)t} 
	[\varepsilon^2
	2 K_T^2t +\varepsilon^2 K_T^2 \int_{0}^{t}(1+E|X^\varepsilon(s)|^2) ds]\\
	& \leq e^{(2 L_{N,T}+2\varepsilon^2 K_T^2)t}\varepsilon^2
	2 K_T^2t\\
	& + e^{(2 L_{N,T}+2\varepsilon^2 K_T^2)t}  \varepsilon^2 K_T^2 \int_{0}^{t} (1+E|x_0|^2) \exp[(2K_T^2+\varepsilon^2K_T^2)s] ds \\
	& \leq \varepsilon^2 2 K_T^2 t e^{(2 L_{N,T}+2\varepsilon^2 K_T^2)t} \\
	& + \varepsilon^2 K_T^2 e^{(2 L_{N,T}+2\varepsilon^2 K_T^2)t}
	(1+E|x_0|^2) \int_{0}^{t}  \exp[(2K_T^2+\varepsilon^2K_T^2)s]ds \\
	& \leq \varepsilon^2 a(t).
\end{align*}
	
	Where we used the result \eqref{(1.5.2)} and $a(t)$ has been chosen such that it is a monotone  increasing function.

Now we want to prove the second assertion of the theorem.
We will now use the Chebyshev inequality according which $P\lbrace \xi(\omega) \geq a \rbrace \leq \frac{Ef(\xi)}{f(a)}$.

	By setting $\xi(\omega) = \max_{0 \leq s \leq t}|X^\varepsilon(s)-x(s)|, \ a=\delta, \ f(x)=x^2$ and  applying the first assertion of the theorem we obtain

\begin{align}
	\label{eq Cheb}
	P\lbrace \max_{0 \leq s \leq t}|X^\varepsilon(s)-x(s)|> \delta \rbrace 
	&\leq \frac{E[\max_{0 \leq s \leq t}|X_s^\epsilon-x_s|]^2}{\delta^2} \nonumber \\
		& \leq \frac{ \varepsilon^2 a(t)}{\delta^2}
	   \nonumber  \\ 
\end{align}
	Taking limits on both sides in \eqref{eq Cheb} , we get
\[
	\lim_{\varepsilon \to 0} P\lbrace \max_{0 \leq s \leq t} |X^\varepsilon(s)-x(s)|>  \delta \rbrace \leq \lim_{\varepsilon \to 0} \frac{ \epsilon^2 a(t)}{\delta^2}=0
\]
\end{proof}	

\subsection{Parabolic Differential equations with a Small Parameter: Cauchy Problem, Transport Equation}

We aim at deriving results concerning the behavior of solutions of Cauchy problem as $\varepsilon \to 0$, from the behavior of $X^\varepsilon(t)$ as $\varepsilon \to 0$. Within preceding section we have obtained a result concerning the behavior of solutions  $X^\varepsilon(t)$ as $\varepsilon \to 0$, which we will exploit in what follows.
%be used in the present section.
In particular, we consider the Cauchy problem:
\begin{equation}
\label{(3.2.2)}
\frac{\partial v^\varepsilon(t,x)}{\partial t} = L^\varepsilon v^\varepsilon(t,x) + c(x) v^\varepsilon (t,x) +g(x), \ v^\varepsilon(0,x)=f(x),
\end{equation}
$t>0, \ x \in \mathbb{R}^r$ for $\varepsilon>0$, together with the problem for the first-order operator obtained considering $\varepsilon=0$:
\begin{equation}
\label{(3.3.2)}
\frac{\partial v^0(t,x)}{\partial t} = L^0 v^0(t,x) + c(x) v^0 (t,x) +g(x), \ v^0(0,x)=f(x)\,,\end{equation}
where $L^\varepsilon$ is a differential operator with a small parameter at the derivatives of highest order:
$$ L^\varepsilon= \frac{\varepsilon^2}{2} \sum_{i,j=1}^{r} a^{ij}(t,x) \frac{\partial^2}{\partial x^i x^j}+ \sum_{i=1}^{r} b^i(t,x) \frac{\partial}{\partial x^i}\,.$$
Every  operator $L^\varepsilon$ (whose coefficients are assumed to be sufficiently regular) has an associated  diffusion process $X^{\varepsilon,x}(t)$.
This diffusion process can be represented by mean of a stochastic equation of the following type:
\begin{equation}
\label{(3.1)}
    dX^\varepsilon(t)= b(t,X^\varepsilon(t))dt + \varepsilon \sigma(t,X^\varepsilon(t)) dW_t, \ X^\varepsilon(0) =x
\end{equation}
where $\sigma(t,x)\sigma^{*}(t,x)=(a^{ij}(t,x)),\ b(t,x)=(b^1(t,x),...,b^r(t,x)).$\\
Solutions of partial differential equations(PDEs) can be represented  in terms of the solutions of stochastic differential equations, which is no one else but the well known Feyman-Kac formula. \\
Hence, assuming  the following conditions are satisfied:
\begin{enumerate}
	\item the function $c(x)$ is uniformly continuous and bounded for $x \in \mathbb{R}^r$;
	\item the coefficients of $L^\varepsilon$ satisfy a local Lipschitz condition, b satisfies dissipativity and dissipativity for the differences;

	\item $k^{-2} \sum\lambda_i^2 \leq \sum_{i,j=1}^r a^{ij}(t,x) \lambda_i \lambda_j \leq k^2 \sum \lambda_i^2$ for any real $\lambda_1,\lambda_2,...,\lambda_r$ \\ and $x \in \mathbb{R}^r$, $t \in [0,T]$ where $k^2$ is a positive constant.
\end{enumerate}
the solutions to problems \eqref{(3.2.2)} and \eqref{(3.3.2)}, both exist and are unique, moreover the solution $v^\varepsilon (t,x)$ of \eqref{(3.2.2)}  is given by

\begin{align}
\label{Feynman-Kac}
	v^\varepsilon(t,x)&= E^x[f(X^{\varepsilon,x}(t)) \exp\lbrace \int_{0}^{t}c(X^{\varepsilon,x}(s))ds \rbrace] \nonumber \\
 &+ E^x[\int_{0}^{t} g(X^{\varepsilon,x}(s)) \exp \lbrace \int_{0}^{s} c(X^{\varepsilon,x}(r))dr \rbrace ds]\,.
\end{align}
To prove formula \eqref{Feynman-Kac}, let us consider the stochastic process 
\begin{equation}
\label{stoch process}
    Y_s=exp\big\{\int_0^s c(X^{\varepsilon,x}(r))dr\big\}v^\varepsilon(t-s,X^{\varepsilon,x}(s))+\int_0^s g(X^{\varepsilon,x}(r))exp\big\{\int_0^r c(X^{\varepsilon,x}(\tau))d\tau\big\}dr\,,
\end{equation}
them applying the {\it product rule} to the process \eqref{stoch process}, along with the Ito's lemma, we get: 
\begin{align*}
 dY_s&=d \bigg( exp\big\{\int_0^s c(X^{\varepsilon,x}(r))dr\big\} \bigg)v^\varepsilon(t-s,X^{\varepsilon,x}(s))\\
 &+exp\big\{\int_0^s c(X^{\varepsilon,x}(r))dr\big\}dv^\varepsilon(t-s,X^{\varepsilon,x}(s))\\
 &+d \bigg( exp\big\{\int_0^s c(X^{\varepsilon,x}(r))dr\big\}\bigg)dv^\varepsilon(t-s,X^{\varepsilon,x}(s))\\
 &+d\bigg(\int_0^s g(X^{\varepsilon,x}(r))exp\big\{\int_0^r c(X^{\varepsilon,x}(\tau))d\tau\big\}dr \bigg) \\
& = c(X^{\varepsilon,x}(s))v^\varepsilon(t-s,X^{\varepsilon,x}(s))\big(exp\big\{\int_0^s c(X^{\varepsilon,x}(r))dr\big\}\big)ds\\
&+ exp\big\{\int_0^s c(X^{\varepsilon,x}(r))dr\big\}dv^\varepsilon(t-s,X^{\varepsilon,x}(s))\\
&+ g(X^{\varepsilon,x}(s))exp\big\{\int_0^s c(X^{\varepsilon,x}(\tau))d\tau\big\}ds\\
&=exp\big\{\int_0^s c(X^{\varepsilon,x}(r))dr\big\}\bigg(c(X^{\varepsilon,x}(s))v^\varepsilon(t-s,X^{\varepsilon,x}(s))\\
&+g(X^{\varepsilon,x}(s))-\frac{\partial v^\varepsilon}{\partial s}+ \frac{\varepsilon^2}{2} \sum_{i,j=1}^{r} a^{ij}(s,X) \frac{\partial^2 v^\varepsilon}{\partial X^i X^j}+ \sum_{i=1}^{r} b^i(s,X) \frac{\partial v^\varepsilon}{\partial X^i}\bigg)ds\\
&+exp\big\{\int_0^s c(X^{\varepsilon,x}(r))dr\big\}\varepsilon \sigma(s,X)\frac{\partial v^\varepsilon}{\partial X}dW_s=exp\big\{\int_0^s c(X^{\varepsilon,x}(r))dr\big\}\varepsilon \sigma(s,X)\frac{\partial v^\varepsilon}{\partial X}dW_s
\end{align*}
Since $v$ satisfies partial differential equation \eqref{(3.2.2)} we will have 
\[
dY_s=exp\big\{\int_0^s c(X^{\varepsilon,x}(r))dr\big\}\varepsilon \sigma(s,X)\frac{\partial v^\varepsilon}{\partial X}dW_s\,,
\]
therefore $Y_s$ is a martingale and it follows that
\begin{align*}
    v^\varepsilon(t,x)&=Y_0=E^x Y_t\\
    &= E^x \bigg[f(X^{\varepsilon,x}(t))exp\big\{\int_0^t c(X^{\varepsilon,x}(s))ds\big\}\\
    &+\int_0^t g(X^{\varepsilon,x}(s))exp\big\{\int_0^s c(X^{\varepsilon,x}(r))dr\big\}ds\bigg]\,.
\end{align*}
Using obtained formula for the solution of the PDE \eqref{(3.2.2)}, we have the following result:
	\begin{Theorem}
	\label{Theorem Cauchy Dissipativity}
	If conditions (1)-(3) are satisfied, then the limit $\lim\limits_{\varepsilon \to 0} v^\varepsilon(t,x)= v^0(t,x)$ exists for every bounded continuous initial function f(x), $x \in \mathbb{R}^r$. \\
	The function $v^0(t,x)$ is a solution of problem \eqref{(3.3.2)}.	
\end{Theorem}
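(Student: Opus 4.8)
The plan is to pass to the limit $\varepsilon\to 0$ directly inside the Feynman--Kac representation \eqref{Feynman-Kac}, using the convergence of the diffusions $X^{\varepsilon,x}$ to the deterministic flow supplied by Theorem \ref{Theorem 1.2.2}. Writing $x(\cdot)$ for the solution of \eqref{ode} with initial datum $x$, I first identify the natural candidate for the limit as the \emph{deterministic} Feynman--Kac expression obtained by formally setting $\varepsilon=0$, namely
\[
\Phi(t,x):=f(x(t))\exp\Big\{\int_0^t c(x(s))\,ds\Big\}+\int_0^t g(x(s))\exp\Big\{\int_0^s c(x(r))\,dr\Big\}\,ds,
\]
in which the outer expectation has disappeared because at $\varepsilon=0$ the trajectory is non-random.

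To prove $v^\varepsilon(t,x)\to\Phi(t,x)$ I would argue term by term. By Theorem \ref{Theorem 1.2.2} one has $E|X^{\varepsilon,x}(t)-x(t)|^2\le\varepsilon^2 a(t)\to0$ and $\max_{0\le s\le t}|X^{\varepsilon,x}(s)-x(s)|\to0$ in probability. Since $f$ is continuous, $f(X^{\varepsilon,x}(t))\to f(x(t))$ in probability; since $c$ is uniformly continuous (condition (1)), the uniform-in-$s$ convergence forces $\int_0^t c(X^{\varepsilon,x}(s))\,ds\to\int_0^t c(x(s))\,ds$ in probability, and continuity of the exponential propagates this to the weights. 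Thus each integrand converges in probability to the corresponding integrand of $\Phi$. The boundedness of $f$, $c$ and $g$ then bounds every integrand by the deterministic constant $(\|f\|_\infty+t\|g\|_\infty)e^{t\|c\|_\infty}$, uniformly in $\varepsilon$ and $\omega$, so the bounded convergence theorem (applied directly to the bounded random variables, with Fubini handling the time integral in the second term) upgrades convergence in probability to convergence of the expectations. This yields $\lim_{\varepsilon\to0}v^\varepsilon(t,x)=\Phi(t,x)$, so the limit exists.

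It remains to identify $\Phi$ with the solution $v^0$ of \eqref{(3.3.2)}. Here I would reuse the Ito/product-rule computation already carried out for \eqref{Feynman-Kac}, but at $\varepsilon=0$ and applied to the (assumed unique) solution $v^0$ of \eqref{(3.3.2)} along the deterministic trajectory $x(\cdot)$. The bracket appearing in the Ito expansion then collapses to a multiple of $L^0 v^0+c\,v^0+g-\partial_t v^0$, which vanishes precisely because $v^0$ solves \eqref{(3.3.2)}, while the martingale part vanishes identically since $\varepsilon=0$. Hence the corresponding $Y_s$ is constant in $s$, and evaluating at the endpoints gives $v^0(t,x)=Y_0=Y_t=\Phi(t,x)$. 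Combining with the previous paragraph, $\lim_{\varepsilon\to0}v^\varepsilon(t,x)=\Phi(t,x)=v^0(t,x)$, and $v^0$ solves \eqref{(3.3.2)} by construction.

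The main obstacle is the rigorous interchange of limit and expectation: the convergence $X^{\varepsilon,x}\to x(\cdot)$ furnished by Theorem \ref{Theorem 1.2.2} is only in probability (and in $L^2$ at fixed times), so one must lean on the \emph{boundedness} of $f$, $c$ and $g$ to dominate the integrands and invoke bounded convergence. This forces the standing assumption that $g$ be bounded and continuous, which is implicit but not listed among (1)--(3); without a growth control on $g$ one would instead have to exploit the uniform second-moment estimate \eqref{(1.5.2)} to bound the tails. A secondary point is that the identification step presupposes enough regularity of $v^0$ for the $\varepsilon=0$ product-rule computation to be valid, which is guaranteed by the stated existence and uniqueness of classical solutions of \eqref{(3.3.2)} under the ellipticity and regularity hypotheses (1)--(3).
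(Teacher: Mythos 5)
Your proposal is correct and takes essentially the same route as the paper: both pass to the limit $\varepsilon\to 0$ inside the Feynman--Kac representation \eqref{Feynman-Kac}, using the convergence in probability supplied by Theorem \ref{Theorem 1.2.2} together with boundedness of the integrands and dominated (bounded) convergence to move the limit under the expectation. If anything, you are more thorough than the paper: you actually justify the identification of the limit with $v^0$ via the deterministic ($\varepsilon=0$) product-rule computation, where the paper simply asserts that the limiting expression solves \eqref{(3.3.2)}, and you correctly note that boundedness and continuity of $g$ is an implicit hypothesis not listed among conditions (1)--(3).
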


\begin{proof}
	As proved above, the solution of \eqref{(3.2.2)} can be represented as:
	\begin{equation}
	\label{(3.4.2)}
	v^\varepsilon(t,x)=E^x [f(X^{\varepsilon,x}(t))exp\big\{\int_0^t c(X^{\varepsilon,x}(s))ds\big\}+\int_0^t g(X^{\varepsilon,x}(s))exp\big\{\int_0^s c(X^{\varepsilon,x}(r))dr\big\}ds]\,.
	\end{equation}
	
We use the convergence of $X^{\varepsilon,x}(s)$ to $X^{0,x}(s)$ in probability on the interval $[0,t]$ as $\varepsilon \to 0$ (Theorem \ref{Theorem 1.2.2}). Having under the sign of mathematical expectation in \eqref{(3.4.2)}  a bounded continuous functional of $X^{\varepsilon,x}(s)$, we can also use the Lebesgue dominated convergence theorem to derive:  

	\begin{align*}
	\lim\limits_{\varepsilon \to 0} v^\varepsilon(t,x) 
	&= \lim\limits_{\varepsilon \to 0} E^x[f(X^{\varepsilon,x}(t))exp\big\{\int_0^t c(X^{\varepsilon,x}(s))ds\big\}] \\
	&+ \lim\limits_{\varepsilon \to 0} E^x[\int_0^t g(X^{\varepsilon,x}(s))exp\big\{\int_0^s c(X^{\varepsilon,x}(r))dr\big\}ds] \\
	&= E^x[ \lim\limits_{\varepsilon \to 0} f(X^{\varepsilon,x}(t))exp\big\{\int_0^t c(X^{\varepsilon,x}(s))ds\big\}] \\
	&+ E^x[ \lim\limits_{\varepsilon \to 0} \int_{0}^{t} g(X^{\varepsilon,x}(s))exp\big\{\int_0^s c(X^{\varepsilon,x}(r))dr\big\}ds] \\
	&= E^x\bigg[f(X^{0,x}(t))exp\big\{\int_0^t c(X^{0,x}(s))ds\big\}\\
 &+\int_0^t g(X^{0,x}(s))exp\big\{\int_0^s c(X^{0,x}(r))dr\big\}ds\bigg]\,,
	\end{align*}
	the function on the right side of the equality is a  solution of \eqref{(3.3.2)}, hence concluding the proof.
\end{proof}

Considering $c(x) \equiv g(x) \equiv 0$ we obtain the special case of the transport equation, namely:
$$\frac{\partial v^\varepsilon(t,x)}{\partial t} = L^\varepsilon v^\varepsilon(t,x), \ v^\varepsilon(0,x)=f(x),$$
whose solution can be written in the following form:
\begin{equation}
	v^\varepsilon(t,x)= E^x[f(X^{\varepsilon,x}(t))] .
\end{equation}
Therefore, as in the case of the Cauchy problem, passing to the limit when $\varepsilon \to 0$, we get $\lim\limits_{\varepsilon \to 0} v^\varepsilon(t,x)= v^0(t,x)$ which represents the solution of 
$$\frac{\partial v^0(t,x)}{\partial t} = L^0 v^0(t,x), \ v^0(0,x)=f(x).$$

\section{Acknowledgments}

Yu.K. is very thankful to Mark Freidlin for an instructive discussion.

The financial support by the Ministry 
for Science and Education of Ukraine
through Project 0122U000048
is gratefully acknowledged.

\newpage

\end{document}